    \def\_email#1@#2\q_nil{%
      \href{mailto:#1@#2}{{\emailfont #1\emailampersat #2}}
    }
    \newcommand\emailfont{\sffamily}
    \newcommand\emailampersat{{\color{red}\small@}}
        \theoremstyle{plain}
\newtheorem{thm}{Theorem}[section]
\newtheorem{lemma}[thm]{Lemma}
\newtheorem{example}[thm]{Example}
\theoremstyle{definition}
 \newcommand{\be}{\begin{equation}} % begin equation
\newcommand{\ee}{\end{equation}}
\newcommand{\one}{\mathbf{1}}
\newcommand{\Set}{\mathbf{Set}}
\newcommand{\Meas}{\mathbf{Meas}}
\newcommand{\T}{\mathcal{P}}
\newcommand{\Cvx}{\mathbf{Cvx}}
\newcommand{\SCvx}{\mathbf{SCvx}}
\newcommand{\G}{\mathcal{G}}
\newcommand{\C}{\mathcal{C}}
\newcommand{\Rinf}{\mathbb{R}_{\infty}}
\newcommand{\Yop}{{\mathcal{Y}^{op}}|}
    \title{\vspace{-2.85cm}Erratum and addendum:  The factorization of the Giry monad}     
\author{Kirk Sturtz}
\date{}
\begin{document}
\thispagestyle{empty} 

\maketitle
\begin{abstract} The category of super convex spaces,  a proper subcategory of convex spaces, possesses the property that it has a codense subcategory.   This codense subcategory allows for  an elementary proof that the Giry monad factorizes through the category of super convex spaces, not the category of convex spaces as erroneously claimed in an earlier article.  
\end{abstract}

\section{Introduction}  \label{s1}
  
   In this article, we correct an error in our article \emph{A factorization of the Giry monad}, thereby showing that the Giry monad $\G$ factors through the category of super convex spaces, $\SCvx$, not the category of convex spaces, $\Cvx$.  We show that by using a codense subcategory of $\SCvx$,   the barycenter map, which is the counit of our adjunct factorization of the Giry monad, is easily constructed, and avoids  requiring any assumptions concerning the existence or non existence of measurable cardinals.   
   
   In this introduction, we give the definition of a super convex space, and in \S \ref{s2}  show how the factorization arises from the existence of a codense subcategory of $\SCvx$.  This article is self-contained, and the few proofs that we make use of are relegated to \S \ref{s3}. 
  At present, we do not know whether $\SCvx$ is equivalent to the category of $\G$-algebras or not.  We hope others will be stimulated by these results and investigate this open question.   
  
  \vspace{.1in}
   
As the category $\SCvx$ is not well know, let us give the definition, as per  B\"orger and Kemper\cite{Borger}.  Let $\Omega$ denote the set of all countable partitions of one,
 \be \nonumber
 \Omega = \{ \alpha = \{\alpha_i\}_{i=1}^{\infty} \, | \,    \sum_{i=1}^{\infty} \alpha_i=1, \alpha_i \in [0,1] \},
 \ee 
where $\sum_{i=1}^{\infty} \alpha_i = $ is shorthand notation for the limit condition, $\displaystyle{ \lim_{N \rightarrow \infty}} \{\sum_{i=1}^N \alpha_i\} =1$.   %We will denote an element in $\Omega$ by $\alpha$.
An $\Omega$-algebra is a set $A$ together with a map $\Omega \rightarrow \Set(A^{\mathbb{N}}, A)$,  $\alpha \mapsto \alpha_A$.  A morphism from an $\Omega$-algebra $A$ to an $\Omega$-algebra $B$ is a set map $A \stackrel{m}{\longrightarrow} B$ satisfying $m \circ \alpha_A = \alpha_B \circ m^{\mathbb{N}}$ for all $\alpha \in \Omega$, where $A^{\mathbb{N}} \stackrel{m^{\mathbb{N}}}{\longrightarrow} B^{\mathbb{N}}$ is defined componentwise.  The $\Omega$-algebras form a category, with composition of morphisms being the set-theoretical one.

With the convention $\alpha_A(a) \stackrel{def}{=} \sum_{i \in \mathbb{N}} \alpha_i a_i$, where $a=\{a_i\}_{i \in \mathbb{N}}$, and $\alpha \in \Omega$, an $\Omega$-algebra $A$ is a super convex space provided the following two axioms are satisfied:
\begin{enumerate}
\item $\sum_{i \in \mathbb{N}} \delta_i^j a_i = a_j$ for all $j \in \mathbb{N}$, and all $\{a_i\}_{i \in \mathbb{N}} \in A^{\mathbb{N}}$.
\item $\sum_{i \in \mathbb{N}} \alpha_i ( \sum_{j \in \mathbb{N}} \beta_j^i a_j) = \sum_{j \in \mathbb{N}}( \sum_{i \in \mathbb{N}} \alpha_i \beta_j^i)a_j$ for all $\alpha, \beta^i \in \Omega$.
\end{enumerate}

Informally,   the category $\SCvx$  has as objects, a set $A$ with a structural map \mbox{$\Omega \rightarrow \Set(A^{\mathbb{N}}, A)$}, which specifies how any sequence of elements in $A$, and element $\{\alpha_i\}_{i=1}^{\infty} \in \Omega$,  combine to yield an element in that set $A$.  The first condition above says that any countable sum of the form $0a_1 + 0a_2 + \ldots 0 a_{j-1} + 1 a_j + 0 a_{j+1} + \ldots$   must be equal to $a_j$.  The second condition is the associativity condition.  The  arrows of $\SCvx$ preserve countable convex sums, so $A \stackrel{m}{\longrightarrow} B$ is a morphism in $\SCvx$ if and only if
\be \nonumber
m(\sum_{i=1}^{\infty} \alpha_i a_i ) = \sum_{i=1}^{\infty} \alpha_i m(a_i)  \quad \quad \textrm{where } \, \sum_{i=1}^{\infty} \alpha_i = 1, \, \alpha_i \in [0,1],
\ee
where the elements $a_i$ all lie in $A$.   

Obviously, $\SCvx$ is a full subcategory of $\Cvx$.

\section{Using codensity to factorize the Giry monad} \label{s2}
 To say a subcategory $\C$ of $\SCvx$ is codense is equivalent to saying that the  truncated Yoneda mapping $\SCvx^{op} \stackrel{\Yop}{\longrightarrow} \Set^{\C}$
 is (still) full and faithful.   Isbell\cite{Isbell}, using the truncated Yoneda mapping, used the terminology of ``right adequate''  rather than a codense subcategory.  While we use the standard terminology of ``codense'', we prefer to think of this concept as defined in terms of  the truncated Yoneda mapping,  as opposed to defining it in terms of a limit  of a functor, e.g., see MacLane\cite[p242]{Mac}.  This perspective is  useful because our problem, as well as many others,  reduces to the question of whether a ``generalized point'' of an object $A$, meaning a  natural transformation $P \in \mathbf{Nat}(Hom(A, \cdot), Hom(\one, \cdot))$, corresponds to a point $a \in A$, i.e., whether $\Yop(1 \stackrel{a}{\longrightarrow} A)=P$.
 
   In $\SCvx$, the full subcategory $\C$ of $\SCvx$, consisting of the single object $\Rinf$, the one point extension of the real line, 
is a codense category of $\SCvx$.\footnote{The super convex structure of $\Rinf$ is an extension of the convex structure of $\Rinf$, which is defined,  for all $u \in (-\infty,\infty)$ and all $r \in (0,1]$,  by  $(1-r) u + r \infty = \infty$. The extension is defined, for all $u_i \in \Rinf$,  by 
 \be \nonumber
 \sum_{i=1}^{\infty} \alpha_i u_i = \left\{  \begin{array}{ll}\displaystyle{ \lim_{N \rightarrow \infty}} \{\sum_{i=1}^N \alpha_i u_i\} & \textrm{provided the limit exist} \\ \infty & \textrm{otherwise} \end{array} \right..
 \ee
 }    
 Since $\C$ consist of a single object, we abuse terminology and just say $\Rinf$ is codense in $\SCvx$.   Because $\Rinf$ is codense it follows that any $\SCvx$ map $\Rinf^A \stackrel{P}{\longrightarrow} \Rinf$ is an evaluation point, $\Yop(a) = P$ for a unique element $a \in A$.  Because the object $\Rinf$ is a coseparator in $\Cvx$, and hence also a coseparator in $\SCvx$, the faithful property of $\Yop$ is immediate.\footnote{ The fact that $\mathbb{R}_{\infty}$ is a cogenerator is due to B\"orger and Kemper\cite{Borger}.}  

%Consider the factorization of the Giry monad, $(\G, \eta, \mu)$, 
 We now illustrate how this ``generalized element''  condition arises in the factorization of the Giry monad,  $(\G, \eta, \mu)$.  For any measurable space $X$, the set  $\G(X)$, consisting of all probability measures on $X$, has a natural super convex space structure associated with it,  since given any sequence $\{P_i\}_{i=1}^{\infty}$ of probability measures on $X$, and any $\alpha \in \Omega$, $\sum_{i=1}^{\infty} \alpha_i P_i \in \G(X)$.
 %\footnote{To see this, just consider the coefficents $\alpha_i$ as defining a discrete probability distribution on $\G(X)$, and then use the natural transformation $\mu$ associated with the Giry monad to conclude the countable convex sum also defines a probability measure on $X$.} 
 Consequently, we can view the functor $\G$ as a functor $\T$ into the category $\SCvx$, and define a functor $\Sigma$

   \begin{equation}   \nonumber
 \begin{tikzpicture}[baseline=(current bounding box.center)]

	\node	(M)	at	(0,0)	               {$\Meas$};
	\node	(C)	at	(4,0)        {$\SCvx$};
	\node         (c)    at       (6,0)        {$\T \dashv \Sigma$};
           \draw[->,above] ([yshift=2pt] M.east) to node {$\T$} ([yshift=2pt] C.west);
           \draw[->,below] ([yshift=-2pt] C.west) to node {$\Sigma$} ([yshift=-2pt] M.east);

 \end{tikzpicture}
 \end{equation}
 \noindent
by assigning to each super convex space $A$, the measurable space $(|A|, \langle \SCvx(A, \Rinf) \rangle)$ where the $\sigma$-algebra, associated with the underlying set $|A|$, is the smallest \mbox{$\sigma$-algebra} generated by $\SCvx(A,\Rinf)$, with $\Rinf$ having the standard Borel $\sigma$-algebra defined on $\mathbb{R}$ extended by the measurable set $\{\infty\}$.

The unit of the adjunction is the natural transformation \mbox{$id_{\Meas} \stackrel{\eta}{\Rightarrow} \Sigma \circ \T$},  at component $X$, just sends an element to the Dirac measure at that point, $x \mapsto \delta_x$.  
To define the counit, $\T \circ \Sigma \stackrel{\epsilon}{\Rightarrow} id_{\SCvx}$, at  component $A$, observe that 
every probability measure $P \in \T(\Sigma A)$ defines an  operator $\hat{P}$ on the space of measurable functions $\Meas(\Sigma A, \Rinf)$, via $f \mapsto \int_A f \, dP$, and this operator preserves countable convex sums.
  By construction, $\SCvx(A, \Rinf) \subseteq \Meas(\Sigma A, \Rinf)$, and both are objects in $\SCvx$. Hence we can take the restriction of $\hat{P}$  to $\SCvx(A, \Rinf)$.    This restriction, $\hat{P}| = \hat{P} \circ \iota$,

\begin{equation}   \label{diagram}
 \begin{tikzpicture}[baseline=(current bounding box.center)]
 
 \node        (sub)  at  (0, -1.6)   {$\Rinf^A$};
  \node     (IA) at  (0,0)  {$\Rinf^{\Sigma A}$};
   \node   (I)  at   (3,0)   {$\Rinf$};
   \node    (d)  at   (1, -2.3) {in $\SCvx$};
   \node     (c)    at  (5.5, 0)   {\small{where}};
   \node    (c1)  at  (6.5, -.6)  {\small{$\Rinf^{\Sigma A}\stackrel{def}{=} \Meas(\Sigma A, \Rinf)$}};
   \node      (c2)  at  (6.3, -1.3)  {\small{$\Rinf^A \stackrel{def}{=} \SCvx(A, \Rinf)$ }};
   
     \draw[>->,left] (sub) to node {\tiny{$\iota$}} (IA);
  \draw[->,above] ([yshift=2pt] IA.east) to node [xshift=-5pt] {\tiny{$\hat{P} = \int_A \cdot \, dP$}} ([yshift=2pt] I.west);
%  \draw[->,below] ([yshift=-2pt] IA.east) to node [xshift=-5pt] {\tiny{$\hat{Q} = \int_A \cdot \, dQ$}} ([yshift=-2pt] I.west);
  \draw[->,below, dashed] (sub) to node [xshift=23pt] {\tiny{$\hat{P}| = ev_a$}} (I);
%  \node (c)  at   (2.5, -1.45) {\tiny{$\textrm{iff}$}};
%  \node  (c2) at  (2.5, -2.) {\tiny{$\forall m \in \Cvx(A, \Rinf) \quad \int_A m dP = \int_A m \, dQ$}};
 \end{tikzpicture}
 \end{equation}
\noindent
is an arrow in $\SCvx$, which lies in the image of the Yoneda mapping\footnote{Because both $\one$ and $A$ are  super convex spaces, and $\Yop$ is full.}, and hence because the object $\Rinf$ is codense in $\SCvx$,  it follows that $\Rinf^A \stackrel{\hat{P}|}{\longrightarrow} \Rinf$ is a point of $A$,  $\hat{P}|= \Yop(a) = ev_a$, for a  unique element $a \in A$.  Note that $\hat{P}$ does not, in general, lie in the image of the Yoneda mapping and hence need not be a point of $A$.
 
Thus we obtain a mapping $\T(\Sigma A) \stackrel{\epsilon_A}{\longrightarrow} A$ sending a probability measure $P$ to the unique element $a \in A$ for which $\hat{P}| = ev_a$.  
This construction is natural in the argument $A$ (Lemma \ref{naturalA}).  The pair of natural transformations, $\eta$ and $\epsilon$, satisfy the triangle equality conditions, and hence we can conclude $\T \dashv \Sigma$ (Theorem \ref{adjunction}).  The counit of that adjunction,  at each component $A$,  $\T(\Sigma A) \stackrel{\epsilon_A}{\longrightarrow} A$ is called the \emph{barycenter mapping} at component $A$.

For brevity, let $\Rinf^{\Rinf} \stackrel{def}{=} \SCvx(\Rinf, \Rinf)$, and let $\SCvx_{\tiny{\Rinf^{\Rinf}}}(\Rinf^A, \Rinf)$ denote the subset of $\SCvx(\Rinf^A, \Rinf)$ satisfying the  condition 
\be \nonumber
J(g \circ m) = g( J(m)) \quad  \forall m \in \Rinf^A, \, \, \forall g \in \Rinf^{\Rinf}.
\ee
The elements $J \in \SCvx_{\tiny{\Rinf^{\Rinf}}}(\Rinf^A, \Rinf)$ are referred to as $\Rinf$-generalized elements of $A$.\cite[Def. 8.19]{LR} An $\Rinf$-generalized element  of $A$ is precisely a natural transformation  $\Set^{\C}(\SCvx(A, \_), \SCvx(\one,\_))$, evaluated at the single component $\Rinf$, and the condition above is the naturality requirement.

With this terminology, the barycenter mapping arises from the fact that the restriction of every probability measure $P$ on a super convex space $A$ is a $\Rinf$-generalized point of $A$, lying in the image of $\Yop$.  By the codensity of $\Rinf$ in $\SCvx$, it follows that the $\Rinf$-generalized point of $A$ is a point.

The fullness condition of $\Yop$ follows from the fact that  a  $\Rinf$-generalized point of $A$, say $J$, satisfies the property that for any $m \in \SCvx(A, \Rinf)$, $J(m)$ lies in the image of the map $m$, $J(m) \in Image(m)$.  This  property does not hold, in general,  for $\Cvx$ as the following example shows.

\begin{example}
Let $P$ denote the half-Cauchy distribution on $\mathbb{R}_+ = [0,\infty)$.  The inclusion map $\mathbb{R}_+ \hookrightarrow \Rinf$ is a convex map, and, as is well known, the expectation of the half-Cauchy probability measure, given by $\int_{\mathbb{R}_+} \iota \, dP$ does not exist, and hence it is impossible to construct a barycenter map $\T(\Sigma \mathbb{R}_+) \rightarrow \mathbb{R}_+$.   This is due to the fact that $\mathbb{R}_+$ is not a super convex space. By choosing $\alpha \in \Omega$ by $\alpha_i = \frac{1}{2^i}$ and taking the points  $x_i = i 2^i \in \mathbb{R}_+$ it follows that the countable sum $\sum_{i=1}^{\infty} \frac{1}{2^i} i 2^i$ does not lie in $\mathbb{R}_+$. 

Note that the inability to construct a barycenter map for  $\mathbb{R}_+$ is not a question of compactness because the open unit interval, $(0,1)$, with the natural super convex space structure,  is not compact either, but it is a super convex space, and hence there exist a barycenter map for the space $(0,1)$.

\end{example}
The next section is devoted to proving, in order, the following properties, referred to above. 
 \begin{enumerate}
 \item For $A$ a super convex space, every $\Rinf$-generalized point $J$ of $A$ satisfies the property that $J(m) \in Image(m)$. 
 \item The object $\Rinf$ is codense in $\SCvx$. 
 \item For $X$ any measurable space, the two super convex spaces,  $\G(X)$ and $\SCvx_{\Rinf^{\Rinf}}(\Rinf^X, \Rinf)$, are $\SCvx$-isomorphic.
 \item The  naturality of the counit $\epsilon$ for $\T$ and $\Sigma$.
 \item  The adjunction, $\T \dashv \Sigma$. 
 \end{enumerate}
 
\section{Proofs} \label{s3}

\begin{lemma} \label{imageLemma}
If $A \in \SCvx$ then every  $\Rinf$-generalized element of $A$ satisfies the property that 
\be \nonumber
J(m) \in Image(m) \quad \forall m \in \Rinf^A.
\ee
\end{lemma}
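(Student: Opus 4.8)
The plan is to argue by contradiction. Fix $m\in\Rinf^A$, put $S=Image(m)$, and suppose $c:=J(m)\notin S$; I will construct a countable convex combination of elements of $\Rinf^A$ on which the two defining properties of an $\Rinf$-generalized element --- that $J$ is a $\SCvx$-morphism, and that $J(g\circ m)=g(J(m))$ for every $g\in\Rinf^{\Rinf}$ --- force $J$ to take an impossible value. Two preliminary facts are needed. First, if $A=\varnothing$ then $\Rinf^A$ has a single element $e$ and naturality applied to $g(x)=x+1$ forces $J(e)=J(e)+1$, so there is nothing to prove; hence assume $A\neq\varnothing$ and write $\kappa_v$ for the constant function (on whatever domain) with value $v$. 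Post-composing any $m'\in\Rinf^A$ with $\kappa_v\colon\Rinf\to\Rinf$ returns $\kappa_v\colon A\to\Rinf$, so naturality yields $J(\kappa_v)=\kappa_v(J(m'))=v$ for all $v\in\Rinf$. Second, any $m'\in\Rinf^A$ with $Image(m')\subseteq\mathbb{R}$ is bounded: otherwise choose $a_i\in A$ with $|m'(a_i)|\geq 2^i$ and observe that $a:=\sum_i 2^{-i}a_i\in A$ (as $A$ is super convex) while $m'(a)=\sum_i 2^{-i}m'(a_i)$ has divergent partial sums, contradicting $m'(a)\in\mathbb{R}$.

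Dispose first of the case $c=\infty$. Then $\infty\notin S$, so $m$ is finite-valued, hence bounded, say $S\subseteq[p,q]$. The function $\kappa_{p+q}-m$ lies in $\Rinf^A$ (it is $[p,q]$-valued, and super convex since $\sum_i\alpha_i=1$), and $\tfrac12 m+\tfrac12(\kappa_{p+q}-m)=\kappa_{(p+q)/2}$. Applying super convexity of $J$ to this convex combination, together with the identity $J(\kappa_v)=v$, gives $\tfrac12 J(m)+\tfrac12 J(\kappa_{p+q}-m)=(p+q)/2$; but the left-hand side equals $\infty$ because $J(m)=\infty$ and $\tfrac12\infty+\tfrac12 w=\infty$ in $\Rinf$. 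This contradiction shows $c\in\mathbb{R}$.

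Now suppose $c\in\mathbb{R}\setminus S$. For $i\in\mathbb{N}$ let $g_i\in\Rinf^{\Rinf}$ be given by $g_i(x)=i\,(x-c)$ on $\mathbb{R}$ and $g_i(\infty)=\infty$ --- an affine endomorphism of positive slope, hence a $\SCvx$-morphism, with $g_i(c)=0$. Take $\alpha=\{6/(\pi^2 i^2)\}_{i\in\mathbb{N}}\in\Omega$, so that $\sum_i\alpha_i\,i=\infty$. For each $a\in A$ one has $m(a)\neq c$ (as $c\notin S$), so the partial sums of $\sum_i\alpha_i(g_i\circ m)(a)$ either fail to converge in $\Rinf$ (when $m(a)\in\mathbb{R}$, the $i$-th term being $\alpha_i\,i\,(m(a)-c)$ and $\sum_i\alpha_i\,i=\infty$) or are constantly $\infty$ (when $m(a)=\infty$); by the convention adopted in the introduction for countable sums in $\Rinf$, the value is $\infty$ in both cases. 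Hence $h:=\sum_i\alpha_i(g_i\circ m)$ equals $\kappa_\infty$ as an element of $\Rinf^A$, so $J(h)=\infty$. On the other hand, super convexity of $J$ followed by naturality gives $J(h)=\sum_i\alpha_i J(g_i\circ m)=\sum_i\alpha_i\,g_i(J(m))=\sum_i\alpha_i\,g_i(c)=\sum_i\alpha_i\cdot 0=0$. The contradiction $\infty=0$ proves $c\in S$.

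The step I expect to require the most care is the identification $h=\kappa_\infty$ in $\Rinf^A$: one must verify that the countable convex sum of the $g_i\circ m$, formed pointwise in $\Rinf$, genuinely is the constant-$\infty$ function --- in particular that partial sums diverging to $-\infty$ also produce the value $\infty$ under the convention fixed in the introduction --- and that a bona fide $\SCvx$-morphism $J$ is obliged to respect this countable sum. Everything else is routine bookkeeping with the super convexity of $J$ and its commutation with the endomorphisms of $\Rinf$.
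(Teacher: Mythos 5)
Your proof is correct, but it takes a genuinely different route from the paper's. The paper classifies $Image(m)$ as an interval (a super convex subspace of $\Rinf$), asserts that a generalized element $J$ preserves the pointwise partial orders on $\Rinf^A$, and then squeezes $J(m)$ between the constant functions $\overline{u}$ and $\overline{v}$ using the weakly averaging property. You share only the weakly averaging step ($J(\kappa_v)=v$, obtained the same way, by naturality against constant endomorphisms of $\Rinf$); everything else is replaced by two explicit contradictions. For $J(m)=\infty$ with $m$ finite-valued you first prove boundedness via the countable combination $\sum_i 2^{-i}a_i$ and then reflect $m$ through $\kappa_{p+q}$; for $J(m)=c\in\mathbb{R}\setminus Image(m)$ you rescale by the affine endomorphisms $g_i(x)=i(x-c)$ and use weights $6/(\pi^2 i^2)$ so that the pointwise countable convex combination $h=\sum_i\alpha_i(g_i\circ m)$ is identically $\infty$ while naturality forces $J(h)=0$. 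What your version buys: it does not need the order-preservation of $J$, which the paper asserts without proof and which is not entirely obvious; and it isolates exactly where super convexity (as opposed to mere convexity) enters, namely the divergence-to-$\infty$ convention for countable sums --- the same mechanism behind the paper's half-Cauchy counterexample for $\Cvx$. What it costs: you lean on the fact that $\SCvx(A,\Rinf)$ is closed under pointwise \emph{countable} convex combinations and that these are the combinations $J$ must preserve; this is part of the paper's standing framework ($\Rinf^A$ is an object of $\SCvx$ with the pointwise structure), but it is doing real work in your argument and deserves an explicit sentence. Two small points: your treatment of $A=\varnothing$ via $g(x)=x+1$ only forces $J(e)=\infty$ (since $\infty$ is a fixed point of $g$), not an immediate contradiction --- but your very next observation, $J(\kappa_v)=v$ for all $v$, already kills that case because all constant maps coincide on the empty domain; and you should say a word confirming that each $g_i$ (extended by $g_i(\infty)=\infty$) really is a countably affine endomorphism of $\Rinf$, which holds because multiplication by $i>0$ sends divergent partial sums to divergent partial sums.
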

\begin{proof}
For every $m \in \Rinf^A$, the image of $m$ is a super convex (sub)space of $\Rinf$.  Hence it is of one of the following forms.
\begin{enumerate}
\item The image of $m$ is an interval, $\{ (u,v), [u,v), [u,v], (u,v] \}$ where $u > -\infty$ and $v < \infty$.
\item The image of $m$ is an interval with the right endpoint $\infty$, $Image(m) \in \{ (u, \infty], [u, \infty] \}$.
\end{enumerate}
The function space $\Rinf^A$ has a partial order structure $<$ defined on it by $f < g$ if and only if $f(a) < g(a)$ for all $a \in A$.
Similiarly, there is a partial order structure $\le$ defined on $\Rinf^A$ by $f \le g$ if and only if $f(a) \le g(a)$ for all $a \in A$.
Any $\Rinf$-generalized element of $A$  preserves this ordering, where $\Rinf$ has the obvious partial orders structures $<$ and $\le$ also.  Now let $A \stackrel{\overline{u}}{\longrightarrow} \Rinf$ denote the constant map on $A$ with value $u$. Because $J$ is a $\Rinf$-generalized point of $A$ it follows that $J$ is a weakly averaging functional, $J(\overline{u})=u$.  

Suppose the image of $m$ is $[u, v)$, with $u> - \infty$ and $v< \infty$.  Then $\overline{u} \le m$ and $m < \overline{v}$, and because $J$ preserves the partial ordering structures, it follows that $u \le J(m) < v$.  Consequently $J(m) \in Image(m)$.
The other cases are handled similarly.
\end{proof}

\begin{thm} The full subcategory of $\SCvx$ consisting of the single object $\Rinf$ is a codense subcategory in $\SCvx$.
\end{thm}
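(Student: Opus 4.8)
The plan is to establish both parts of ``full and faithful'' for the truncated Yoneda functor $\SCvx^{op} \stackrel{\Yop}{\longrightarrow} \Set^{\C}$, where $\C$ is the full subcategory on the single object $\Rinf$. Faithfulness is immediate: $\Rinf$ is a coseparator in $\Cvx$, hence in the full subcategory $\SCvx$, and faithfulness of $\Yop$ is exactly this separation property. All the content is in fullness: one must show that every $\Rinf$-generalized element $J$ of a super convex space $A$ --- equivalently every $m \mapsto J(m)$ in $\SCvx_{\Rinf^{\Rinf}}(\Rinf^A, \Rinf)$, equivalently every natural transformation in $\Set^{\C}(\SCvx(A,\_), \SCvx(\one,\_))$ --- equals $ev_a = \Yop(a)$ for a unique $a \in A$. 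Throughout I would use the properties of $J$ already recorded in and around Lemma \ref{imageLemma}: $J$ preserves countable convex combinations and the post-composition action of $\Rinf^{\Rinf}$ (in particular constants, non-negative scalings, and $t \mapsto -t$), $J$ is order preserving and weakly averaging, and $J(m) \in Image(m)$ for every $m \in \Rinf^A$.

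For $m \in \Rinf^A$ put $A_m := m^{-1}(J(m))$; since the singleton $\{J(m)\}$ is a super convex subspace of $\Rinf$ and $m$ is a morphism, $A_m$ is a super convex subspace of $A$, nonempty by Lemma \ref{imageLemma}. Any $a \in \bigcap_m A_m$ satisfies $m(a) = J(m)$ for all $m$, i.e.\ $J = ev_a$, so it suffices to prove this intersection is nonempty, and I would do this in two stages. Stage one is the finite intersection property. Given $m_1, \dots, m_n$, form $p = \langle m_1, \dots, m_n \rangle \colon A \to \Rinf^{\times n}$ and let $K = p(A)$; as the image of a super convex space under a morphism, $K$ is closed under countable convex combinations, hence is a convex subset of $\Rinf^{\times n}$. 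Put $\vec c = (J(m_1), \dots, J(m_n))$. For any (super) affine $g \colon \Rinf^{\times n} \to \Rinf$ the composite $g \circ p$ again lies in $\Rinf^A$, and building $g$ from convex combinations of coordinate projections together with the $\Rinf^{\Rinf}$-action yields $J(g \circ p) = g(\vec c)$. Two consequences: (i) $\vec c$ cannot be strictly separated from $K$ by an affine $g$ --- such a $g$ would push $g \circ p$ uniformly away from the constant $\overline{g(\vec c)}$, contradicting that $J$ is order preserving and sends $\overline{g(\vec c)}$ to $g(\vec c)$ --- so $\vec c \in \overline{K}$; and (ii) applying $g(\vec c) = J(g \circ p) \in Image(g \circ p) = g(K)$ to a supporting functional $g$ at a point of $\overline{K} \setminus K$ forces its supremum on $K$ to be attained inside $K$, and a short induction on $\dim K$ (passing to the exposed face $K \cap \{g = g(\vec c)\}$) then gives $\vec c \in K$. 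Hence $\bigcap_j A_{m_j} \neq \emptyset$. (Families in which some $m_j$ attains $\infty$ need minor extra care: first impose the constraints $A_m$ with $J(m) = \infty$, on which the relevant $m$ is constant, and run the separation in the finite part.)

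Stage two --- passing from finite intersections to the whole intersection $\bigcap_m A_m$ --- is where the countable-convexity hypothesis is indispensable (it is precisely what $\Cvx$ lacks, as the half-Cauchy example shows), and I expect this to be the main obstacle. The plan is to exploit a completeness of super convex spaces. Since $\Rinf$ is a coseparator, the canonical map $e_A \colon A \to \prod_{m \in \Rinf^A} \Rinf$, $a \mapsto (m(a))_m$, is a monomorphism of super convex spaces; I would pass to the closure $\bar{A}$ of its image, which --- with $\Rinf$ given its natural compact topology --- is a compact super convex space in which the projections restrict to continuous maps $\overline{m} \colon \bar{A} \to \Rinf$ extending the $m$'s. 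The closed, hence compact, super convex subspaces $\bar{A}_m := \overline{m}^{-1}(J(m))$ contain the $e_A(A_m)$, so they inherit the finite intersection property from stage one, and compactness yields $\phi \in \bigcap_m \bar{A}_m$; since its coordinates are $\overline{m}(\phi) = J(m)$, this $\phi$ is $J$ itself, so $J \in \bar{A}$. The remaining, delicate point is that $J$ already lies in $e_A(A)$ rather than merely in the completion, and here Lemma \ref{imageLemma} re-enters: a point of $\bar{A} \setminus e_A(A)$ is detected by some $m$ for which $\overline{m}$ carries it into $\overline{Image(m)} \setminus Image(m)$, an eventuality forbidden by $J(m) \in Image(m)$. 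Granting this, $J = e_A(a)$ for a unique $a \in A$, so $m(a) = J(m)$ for all $m$ and $J = ev_a = \Yop(a)$; together with faithfulness this proves $\Rinf$ codense in $\SCvx$.
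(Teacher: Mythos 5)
Your strategy is genuinely different from the paper's: you reduce fullness to showing $\bigcap_{m} m^{-1}(J(m)) \neq \emptyset$ and attack that with a separation argument for finite families followed by a compactness/completion argument for the whole family, whereas the paper stays entirely algebraic, applying Lemma \ref{imageLemma} to the single map $(1-r)m_1 + r m_2$ and using $J((1-r)m_1+rm_2)=(1-r)J(m_1)+rJ(m_2)$ to produce one witness $a_3$ serving $m_1$ and $m_2$ simultaneously. Unfortunately, both of the steps you yourself flag as delicate contain genuine gaps, and they fail for the same underlying reason: the image of a super convex space under a morphism into $\Rinf^{\times n}$ (or a product of copies of $\Rinf$) is convex but in general not closed, and membership of a point in a convex set is not detected by its one-dimensional affine images.

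Concretely, your stage-one passage from ``$\vec c \in \overline{K}$ and $g(\vec c) \in g(K)$ for every affine $g$'' to ``$\vec c \in K$'' breaks down. Take $K = \bigl((0,1)\times(0,1)\bigr) \cup \{(0,0)\}$, which is closed under countable convex combinations and hence a legitimate image $p(A)$, and take $\vec c = (0,\tfrac12)$. Every affine $g(x,y) = ax+by+c$ satisfies $g(\vec c) \in g(K)$ (check the cases $b>0$, $b<0$, $b=0$ separately; restricting to the $\SCvx$-morphisms $\Rinf^{\times 2} \to \Rinf$ only shrinks the constraint set), and $\vec c$ cannot be strictly separated from $K$, yet $\vec c \notin K$. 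The proposed induction stalls because the exposed face $F = K \cap \{x=0\} = \{(0,0)\}$ does not contain $\vec c$ in its closure; for non-closed convex $K$ one has $\overline{K} \cap H \neq \overline{K \cap H}$ in general. So the finite intersection property cannot be extracted from functionals factoring through $p$ alone; ruling out such a $\vec c$ would have to invoke $J$ on maps in $\Rinf^A$ that do not factor through $\langle m_1,\dots,m_n\rangle$, and your argument never does. The same example defeats the last step of stage two: a point of $\bar{A} \setminus e_A(A)$ need not be detected by a single $m$ carrying it into $\overline{Image(m)} \setminus Image(m)$ --- here $(0,\tfrac12)$ lies in the closure of $K$ while both of its coordinates lie in the corresponding coordinate images $[0,1)$ and $\{0\}\cup(0,1)$. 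Lemma \ref{imageLemma} is a necessary condition on each coordinate separately and is far too weak to pin a point inside the image of the diagonal embedding. (Stage two also leaves unaddressed whether $\bar{A}$ is a super convex subspace at all: the countable convex structure on $\Rinf$, with its ``limit does not exist $\Rightarrow \infty$'' clause, is not continuous, so closures of super convex subspaces need a separate argument.) The paper's proof, whatever one thinks of the brevity of its final step, sidesteps all of this by only ever applying Lemma \ref{imageLemma} to one $\Rinf$-valued map at a time.
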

\begin{proof}
The fact that $\Rinf$ is the coseparator for $\SCvx$ implies that $\Yop$ is faithful.
To prove it is full, let  $A$ be any convex space and let $J$ be a $\Rinf$-generalized element of $A$.   
By Lemma \ref{imageLemma}, for every countably convex  map $m \in \Rinf^A$, $J(m)$ lies in the image of $m$, and hence there exist at least one $a \in A$ such that
\be \nonumber
J(m) = ev_a(m).
\ee
We need to prove that there exist an $a \in A$ such that $J=ev_a$, so that the choice of $a$ does not depend upon $m$.
 
For every $m \in \Rinf^A$, define an equivalence relation $\sim_m$ on $A$ by $a_1 \sim_m a_2$ if and only if $m(a_1)=m(a_2)$.  It follows that if $a_1 \sim_m a_2$ then $m(a_1) = J(m)=m(a_2)$.

%\color{red}
%Using the naturality condition, it follows, for every $g \in \Rinf^{\Rinf}$, that
%\be \nonumber
%g(m(b)) = ev_b(g \circ m) =  J( (g \circ m)) =  g J(m) = g ev_a(m) = g(m(a))
%\ee
%where $b$ is the evaluation point satisfying $J(g \circ m) = ev_b(g \circ m)$.   But since this equation holds for every $g \in \SCvx(\Rinf,\Rinf)$ it follows that  the two evaluation points, $a$ and $b$, must coincide, $b=a$.  Thus we can say, for  any two $m_1, m_2 \in \SCvx(A, \Rinf)$  that
%\be \nonumber
%J(m_1) = ev_a = J(m_2)  \quad \textrm{ whenever } m_1 = g \circ m_2 \, \textrm{ or }m_2 = g \circ m_1
%\ee
%\color{black}

Now suppose $m_1, m_2 \in \Rinf^A$ are two elements  for which $J(m_1) = m_1(a_1)$ and $J(m_2) = m_2(a_2)$.  Consider the quantity
$J((1-r) m_1 + r m_2)$.  Since $J((1-r) m_1 + r m_2)$ lies in the image of $(1-r) m_1 + r m_2$, there exist some $a_3 \in A$ such that 
\be \nonumber
J((1-r)m_1 + r m_2) = ((1-r)m_1 + r m_2)(a_3) = (1-r) m_1(a_3) + r m_2(a_3).
\ee
  But since $J$ is a countably convex map, we also have the equality
  \be \nonumber
  J((1-r)m_1 + r m_2) = (1-r)J(m_1) + r J(m_2) = (1-r)m_1(a_1) + r m_2(a_2).
  \ee
Equating the last two expressions for $J((1-r)m_1 + r m_2)$, we obtain 
\be \nonumber
(1-r)m_1(a_1) + r m_2(a_2) = (1-r) m_1(a_3) + r m_2(a_3).
\ee
Since this equation holds for all $r \in [0,1]$, it follows that  
\be \nonumber
m_1(a_1)=J(m_1) = m_1(a_3)  \quad \textrm{ and } \quad  m_2(a_2) =J(m_2)= m_2(a_3).
\ee
In terms of the relations $\sim_{m_1}$ and $\sim_{m_2}$, the above equations can be expressed as
\be \nonumber
a_1 \sim_{m_1} a_3  \quad \textrm{and} \quad a_3 \sim_{m_2} a_2
\ee
implying that for any two elements in $m_1, m_2 \in \Rinf^A$ there exist an element in $a_3 \in A$ satisfying $J(m_1) = ev_{a_3}(m_1)$ and $J(m_2) = ev_{a_3}(m_2)$.  Consequently, we conclude  there is a unique element $a \in A$ such that $J = ev_a$.

%for any two distinct elements $a_1, a_2 \in A$, there exist a $k \in \Rinf^A$ (co)separating them, $k(a_1) \ne k(a_2)$ it follows that there is a unique element $a \in A$ such that $J = ev_a$.

\end{proof}

\begin{lemma} \label{naturalA} The mapping $\T(\Sigma A) \stackrel{\epsilon_A}{\longrightarrow} A$ 
 defined by $P \mapsto a$, where the restriction of the operator $\hat{P}$  satisfies $\hat{P}| = ev_a$,
specifies the components of a natural transformation.
\end{lemma}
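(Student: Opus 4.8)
The plan is to verify directly the naturality identity $h \circ \epsilon_A = \epsilon_B \circ \T(\Sigma h)$ for an arbitrary morphism $h \colon A \to B$ in $\SCvx$. First I would record the (routine) fact that $\Sigma h \colon \Sigma A \to \Sigma B$ is a measurable map: for every $n \in \SCvx(B,\Rinf)$ the composite $n \circ h$ lies in $\SCvx(A,\Rinf)$ and hence is $\Sigma A$-measurable, so $h$ pulls the generating family $\SCvx(B,\Rinf)$ of the $\sigma$-algebra of $\Sigma B$ back into the $\sigma$-algebra of $\Sigma A$. Thus $\Sigma$ is a functor on morphisms and $\T(\Sigma h)$ is the push-forward map $P \mapsto (\Sigma h)_\ast P$.

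Next, fix $P \in \T(\Sigma A)$ and put $a = \epsilon_A(P)$, $Q = \T(\Sigma h)(P) = (\Sigma h)_\ast P$, and $b = \epsilon_B(Q)$; these are well defined by the construction of $\epsilon$ (codensity of $\Rinf$ in $\SCvx$), and by definition $\hat{P}| = ev_a$ and $\hat{Q}| = ev_b$. The heart of the argument is the chain of equalities, valid for every $n \in \SCvx(B,\Rinf)$,
\begin{equation} \nonumber
n(b) = \hat{Q}(n) = \int_B n \, d\big((\Sigma h)_\ast P\big) = \int_A (n \circ h)\, dP = \hat{P}(n \circ h) = (n \circ h)(a) = n\big(h(a)\big),
\end{equation}
where the third equality is the change-of-variables formula for an image measure along the measurable map $\Sigma h$ (whose underlying set map is $h$), and the fifth uses $\hat{P}| = ev_a$ together with the fact that $n \circ h \in \SCvx(A,\Rinf)$, so that $n \circ h$ lies in the domain of the restricted operator $\hat{P}|$.

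Finally, since the displayed identity $n(b) = n(h(a))$ holds for every $n \in \SCvx(B,\Rinf)$, and $\Rinf$ is a coseparator for $\SCvx$, we conclude $b = h(a)$, i.e.\ $\epsilon_B(\T(\Sigma h)(P)) = h(\epsilon_A(P))$, which is exactly naturality in the argument $A$. The only step requiring attention is the change-of-variables equality, since the functions in play are $\Rinf$-valued and not necessarily nonnegative; but this is the standard identity for push-forward measures, and it causes no trouble here because the relevant integrals are precisely the values $\hat{P}(n\circ h)$ and $\hat{Q}(n)$ of operators that are already known (from the construction of $\epsilon$, via Lemma \ref{imageLemma}) to be well behaved on super convex spaces. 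I do not expect any genuine obstacle: the conceptual content was absorbed into the codensity theorem, and what remains is bookkeeping with push-forward measures together with the coseparator property of $\Rinf$.
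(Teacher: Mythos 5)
Your proposal is correct and follows essentially the same route as the paper's own proof: apply the change-of-variables formula for the pushforward measure to get $\hat{Q}(n)=\hat{P}(n\circ h)$ for $n\in\SCvx(B,\Rinf)$, use the defining property $\hat{P}|=ev_a$ together with $n\circ h\in\SCvx(A,\Rinf)$, and conclude $b=h(a)$ from the fact that $\Rinf$ separates points. The only addition is your explicit check that $\Sigma h$ is measurable, which the paper leaves implicit; this is a worthwhile remark but not a different argument.
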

\begin{proof}
Let $m \in \SCvx(A, B)$. 
To prove naturality we must show that \mbox{$m(\epsilon_A(P|)) = \epsilon_B((Pm^{-1})|)$}, where $(Pm^{-1})|$ is the pushforward probability measure on $\Sigma B$, restricted to operate on the set
\be \nonumber
\SCvx(B, \Rinf) \subseteq \Meas(\Sigma B, \Rinf).
\ee
  Since $\epsilon_B(Pm^{-1}|)$ is the unique element in $B$ satisfying the equation
  \be \nonumber
  (P|m^{-1})(k) = \epsilon_B(P|m^{-1})(k), \quad \textrm{ for all }k \in \SCvx(B, \Rinf),
  \ee
 we have
\mbox{$P|(k \circ m) = k(\epsilon_B(P|m^{-1}))$}.  Since $k \circ m \in \SCvx(A, \Rinf)$ it follows by the uniqueness of $\epsilon_A(P|)$, that
\be \nonumber
(k \circ m)(\epsilon_A(P|)) = k( \epsilon_B(Pm^{-1}|) ) \quad \forall k \in \SCvx(B, \Rinf).
\ee
It therefore follows that $m(\epsilon_A(P|)) = \epsilon_B((Pm^{-1})|)$, thereby proving naturality.
 
\end{proof}

\begin{lemma} \label{lemmaIso}
The map $\G(X) \stackrel{\phi}{\longrightarrow} \SCvx_{\Rinf^{\Rinf}}(\Rinf^X, \Rinf)$,  specified by $\phi(P)(\chi_{U}) = P(U)$, makes the diagram
\begin{equation}   \label{diagram}
 \begin{tikzpicture}[baseline=(current bounding box.center)]
          \node   (GX)   at   (0,0)    {$\G(X)$};
          \node    (SC) at     (0, -2)  {$\SCvx_{\Rinf^{\Rinf}}(\Rinf^X, \Rinf)$};
          \node    (R)    at     (3, -1)    {$\Rinf$};
          \node     (c)   at     (6.7, -1)   {$\phi(P)(\chi_{U}) = P(U) \quad \forall P \in \G(X)$};
          \draw[->,above,right] (GX) to node [yshift=3pt]{$ev_U$} (R);
          \draw[->,below, right] (SC) to node [yshift=-5pt, xshift=-2pt] {$ev_{\chi_U}$} (R);
          \draw[->,left] ([xshift=-2pt] GX.south) to node {$\phi$} ([xshift=-2pt] SC.north);
          \draw[->,right,dashed] ([xshift=2pt] SC.north) to node {$\phi^{-1}$} ([xshift=2pt] GX.south);
 \end{tikzpicture}
 \end{equation}
 commute, and $\phi$
 is an isomorphism of super convex spaces.
\end{lemma}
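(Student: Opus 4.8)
The plan is to establish, in turn, that $\phi$ is a well-defined morphism of super convex spaces, that it makes the displayed triangle commute, that it is a bijection, and that its set-theoretic inverse is again a morphism. The last point is free: a bijective homomorphism of $\Omega$-algebras has an inverse that is again an $\Omega$-algebra homomorphism (apply $m^{-1}$ to $m(\sum_i\alpha_i a_i)=\sum_i\alpha_i m(a_i)$), so a bijective morphism in $\SCvx$ is automatically an isomorphism. For the first two points, a probability measure $P\in\G(X)$ gives the functional $\phi(P)\colon m\mapsto\int_X m\,dP$ on $\Rinf^X$; this preserves countable convex sums by monotone/dominated convergence (modulo the conventions by which the integral is kept $\Rinf$-valued), and since every $g\in\Rinf^{\Rinf}$ is affine (or constant) one has $\int_X(g\circ m)\,dP=g\bigl(\int_X m\,dP\bigr)$, so $\phi(P)\in\SCvx_{\Rinf^{\Rinf}}(\Rinf^X,\Rinf)$; taking $m=\chi_U$ gives $\phi(P)(\chi_U)=P(U)$, which is commutativity of the triangle; and $\int_X m\,d(\sum_i\alpha_i P_i)=\sum_i\alpha_i\int_X m\,dP_i$ shows $\phi$ itself is a morphism in $\SCvx$. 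Injectivity is immediate: $\phi(P)=\phi(Q)$ forces $P(U)=\phi(P)(\chi_U)=\phi(Q)(\chi_U)=Q(U)$ for every measurable $U$, hence $P=Q$.

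The substance of the lemma is surjectivity. Given $J\in\SCvx_{\Rinf^{\Rinf}}(\Rinf^X,\Rinf)$, the plan is to define a set function $\mu_J(U):=J(\chi_U)=ev_{\chi_U}(J)$ on the $\sigma$-algebra of $X$ and show $\mu_J\in\G(X)$. Applying the $\Rinf^{\Rinf}$-equivariance to the constant maps in $\Rinf^{\Rinf}$ shows $J$ is weakly averaging, $J(\overline{u})=u$; in particular $\mu_J(X)=J(\overline{1})=1$ and $\mu_J(\emptyset)=J(\overline{0})=0$. From $m_1+m_2=\tfrac12(2m_1)+\tfrac12(2m_2)$ and equivariance under $t\mapsto 2t$ one gets that $J$ is additive on pairs of functions with a well-defined pointwise sum, and applied to $\chi_U+\chi_{X\setminus U}=\overline{1}$ this yields finite additivity; by Lemma \ref{imageLemma} (and the order-preservation of $\Rinf$-generalized elements used there), $\overline{0}\le\chi_U\le\overline{1}$ forces $0\le\mu_J(U)\le1$. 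For $\sigma$-additivity — the one genuinely new computation — let $U=\bigsqcup_{i\ge1}U_i$ be a disjoint countable union, fix any $\alpha\in\Omega$ with all $\alpha_i>0$, and observe that in $\Rinf^X$ one has $\chi_U=\sum_{i\ge1}\alpha_i\,(\alpha_i^{-1}\chi_{U_i})$ (the partial sums equal $\chi_{U_1\cup\cdots\cup U_n}$ and converge pointwise to $\chi_U$). Since $J$ preserves countable convex sums and is equivariant under $t\mapsto\alpha_i^{-1}t$,
\[
\mu_J(U)=J(\chi_U)=\sum_{i\ge1}\alpha_i\,J(\alpha_i^{-1}\chi_{U_i})=\sum_{i\ge1}\alpha_i\alpha_i^{-1}\,J(\chi_{U_i})=\sum_{i\ge1}\mu_J(U_i),
\]
so $\mu_J$ is a probability measure.

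It remains to check $\phi(\mu_J)=J$, i.e.\ $J(m)=\int_X m\,d\mu_J$ for every $m\in\Rinf^X$. Additivity of $J$ together with equivariance under scalar multiplication gives this for nonnegative simple functions; writing a general nonnegative $m$ as $m=\sum_{n\ge1}\alpha_n(\alpha_n^{-1}d_n)$ with $d_n=s_n-s_{n-1}$ for an increasing simple approximation $s_n\uparrow m$ (and $s_0=0$), the countable convexity and scalar-equivariance of $J$ give $J(m)=\sum_nJ(d_n)=\sum_n\int_X d_n\,d\mu_J=\int_X m\,d\mu_J$ by monotone convergence; the general case follows by splitting $m=m^+-m^-$, using additivity $J(m)+J(m^-)=J(m^+)$, and invoking the $\Rinf$-conventions attached to the integral operator. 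Hence $\phi$ is onto, so a bijection, so an isomorphism in $\SCvx$, and the triangle commutes by construction.

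The step I expect to be the main obstacle is the surjectivity argument, and within it two points: the nonnegativity of $\mu_J$, which is exactly where one must appeal to the order-preservation of $\Rinf$-generalized elements coming from Lemma \ref{imageLemma} (equivalently, the codensity of $\Rinf$) rather than the bare equivariance condition; and the passage from the identity $J(m)=\int_X m\,d\mu_J$ on the sub–super-convex-space generated by the indicator functions to all of $\Rinf^X$, which needs the monotone-approximation argument above together with care about the $\Rinf$-valued conventions for integrals of functions that are unbounded below. The $\alpha_i^{-1}$-rescaling identity that converts countable convexity of $J$ into countable additivity of $\mu_J$ is the one essential new idea; everything else is bookkeeping or cites the preceding results.
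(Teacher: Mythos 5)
Your proof is correct and follows the same route as the paper's: the heart of both arguments is the rescaling identity $\chi_U=\sum_i\alpha_i(\alpha_i^{-1}\chi_{U_i})$ (the paper fixes $\alpha_i=2^{-i}$), which converts countable convexity plus $\Rinf^{\Rinf}$-equivariance of $J$ into $\sigma$-additivity of $\mu_J$, together with weak averaging for normalization. You do go further than the paper in one respect: the paper stops after showing $\phi^{-1}(J)\in\G(X)$ and asserting that $\phi$ and $\phi^{-1}$ are $\SCvx$-arrows, leaving implicit the verification that they are mutually inverse --- your identity $J(m)=\int_X m\,d\mu_J$ for all $m\in\Rinf^X$ via simple-function approximation, with the attendant care about $\Rinf$-valued integral conventions --- so that part of your write-up is a genuine completion of the argument rather than a detour.
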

\begin{proof}
The commutativity of the diagram, without  the inverse mapping,  is the definition of $\phi$.

  The inverse of this map sends an element $\Rinf^X \stackrel{J}{\longrightarrow} \Rinf$ to the probability measure $\Sigma_X \stackrel{\phi^{-1}(J)}{\longrightarrow} [0,1]$ defined by $(\phi^{-1}{J})U = J(\chi_U)$.  This function $\phi^{-1}{J}$ satisfies $(\phi^{-1}{J})(\emptyset)=0$ and $(\phi^{-1}{J})(X)=1$ because $J$ is weakly averaging.  

To show  $\phi^{-1}(J)$ is also a countably additive function let $\{U_i\}_{i=1}^{\infty}$ be any disjoint sequence  of  measurable sets in $X$.  Since $J$ is a $\Rinf$-generalized point, $J(s \chi_{U_i}) = s J(\chi_{U_i})$ for any scale factor $s$, and hence we have, using the fact $\sum_{i=1}^{\infty} \frac{1}{2^i} = 1$, that 
 \be \nonumber
J(\chi_U) = J( \displaystyle \sum_{i=1}^{\infty} \chi_{U_i} ) = J( \displaystyle{\sum_{i=1}^{\infty}}  \frac{1}{2^i} (2^i \chi_{U_i}))  
=  \displaystyle{\sum_{i=1}^{\infty}} \frac{1}{2^i} J(2^i \chi_{U_i})  
= \displaystyle{\sum_{i=1}^{\infty}}   J( \chi_{U_i} ) 
 \ee
 where the third line follows from the fact that $J$ is countably additive, and that $\{\frac{1}{2^i}\}_{i=1}^{\infty}$ is a countable partition of one.
Consequently, $\phi^{-1}(J)$ is a probability measure, hence lies in $\G(X)$.
 
 The fact that $\phi$ and $\phi^{-1}$ are $\SCvx$ arrows follows from the pointwise definitions used to define the super convex structure of these two spaces.
% \be \nonumber
 %\phi(\sum_{i=1}^{\infty} \alpha_i P_i)(\chi_U) = \sum_{i=1}^{\infty} \alpha_i \phi(P_i)(\chi_U)  = \bigg( \sum_{i=1}^{\infty} \alpha_i \%phi(P_i)\bigg) (\chi_U)  
 %\ee
 %and similarly for $\phi^{-1}$.
\end{proof}

\begin{lemma} The pair of functors $\T$ and $\Sigma$ decompose the Giry monad as \mbox{$\Sigma \circ \T = \G$}.
\end{lemma}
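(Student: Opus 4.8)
The statement asserts an equality of endofunctors of $\Meas$, so I would verify it on objects and on morphisms separately, the morphism part being essentially formal. For a measurable space $X$ the underlying set of $\Sigma(\T(X))$ is $|\T(X)| = |\G(X)|$, the set of probability measures on $X$, which is precisely the underlying set of $\G(X)$. For a measurable map $f\colon X\to Y$, the map $\T(f)\colon \G(X)\to\G(Y)$ is the pushforward $P\mapsto Pf^{-1}$; it preserves countable convex combinations, since $(Pf^{-1})(V)$ is additive in $P$ over such combinations, hence is a $\SCvx$-morphism, and $\Sigma(\T(f))$ is the very same underlying function, measurable because $\Sigma$ is a functor. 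As $\G(f)$ is this same pushforward, the only remaining point is to check that $\Sigma(\T(X))$ and $\G(X)$ carry the same $\sigma$-algebra on their common underlying set.

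By construction the $\sigma$-algebra of $\Sigma(\T(X))$ is $\langle\,\SCvx(\G(X),\Rinf)\,\rangle$, the smallest one making every super convex map $\G(X)\to\Rinf$ measurable, whereas that of $\G(X)$ is the Giry $\sigma$-algebra $\langle\,\{ev_U\mid U\in\Sigma_X\}\,\rangle$ generated by the evaluations $ev_U\colon P\mapsto P(U)$. One inclusion is immediate: each $ev_U$ takes values in $[0,1]\subseteq\Rinf$ and satisfies $ev_U\bigl(\sum_i\alpha_iP_i\bigr)=\sum_i\alpha_i\,ev_U(P_i)$, so $ev_U\in\SCvx(\G(X),\Rinf)$; hence the Giry $\sigma$-algebra is generated by a subfamily of $\SCvx(\G(X),\Rinf)$ and is contained in $\langle\,\SCvx(\G(X),\Rinf)\,\rangle$.

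The content of the lemma is the reverse inclusion: that \emph{every} super convex map $m\colon\G(X)\to\Rinf$ is measurable for the Giry $\sigma$-algebra. I would first recall the standard fact that the Giry $\sigma$-algebra also makes each functional $P\mapsto\int_X g\,dP$ measurable for $g\in\Meas(X,[0,1])$ (indicators recover the $ev_U$, simple functions follow by linearity, general nonnegative $g$ by monotone convergence, the value $\infty$ being absorbed by the super convex structure of $\Rinf$). It then suffices to show $m$ coincides with such an integral functional. Here I would invoke Lemma~\ref{lemmaIso}, whose $\SCvx$-isomorphism $\phi$ identifies $\G(X)$ with $\SCvx_{\Rinf^{\Rinf}}(\Rinf^X,\Rinf)$, sending $P$ to $g\mapsto\int_X g\,dP$ and, in particular, $\delta_x$ to evaluation at $x$; transporting $m$ across $\phi$ reduces the claim to showing that a super convex map $\SCvx_{\Rinf^{\Rinf}}(\Rinf^X,\Rinf)\to\Rinf$ is evaluation $J\mapsto J(g_0)$ at a suitable $g_0\in\Meas(X,\Rinf)$, equivalently that $m(P)=\int_X g_0\,dP$ with $g_0(x):=m(\delta_x)$.

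The main obstacle is exactly this identification of a super convex functional on $\G(X)$ with integration against a measurable function, and I would break it into three pieces: (i) $g_0=m\circ(x\mapsto\delta_x)$ is $\Sigma_X$-measurable — equivalently, the assertion that the unit $x\mapsto\delta_x$ is a $\Meas$-morphism into $\Sigma(\T(X))$; (ii) $m(P)=\int_X g_0\,dP$ for purely atomic $P=\sum_i p_i\delta_{x_i}$, which is immediate from the countable convexity of $m$; and (iii) the extension to arbitrary $P\in\G(X)$, obtained by decomposing $P=\sum_i P(U_i)\,P(\,\cdot\mid U_i)$ over countable measurable partitions $\{U_i\}$ of $X$, applying countable convexity of $m$, and controlling the terms $m\bigl(P(\,\cdot\mid U_i)\bigr)$ through the ``value lies in the image'' property of super convex maps to $\Rinf$ (Lemma~\ref{imageLemma}) as the partitions are refined, so that the resulting sums converge to $\int_X g_0\,dP$. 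Once $m(P)=\int_X g_0\,dP$ with $g_0$ measurable, $m$ is Giry-measurable, giving the reverse inclusion; together with the object and morphism identifications above this yields $\Sigma\circ\T=\G$.
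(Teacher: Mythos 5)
Your overall architecture matches the paper's: the underlying sets and maps of $\Sigma\circ\T$ and of $\G$ agree essentially by definition, the easy inclusion (each $ev_U$ is a countably convex map into $\Rinf$, so the Giry $\sigma$-algebra is contained in the one generated by $\SCvx(\G(X),\Rinf)$) is correct, and your reduction of the reverse inclusion --- via the isomorphism $\phi$ of Lemma \ref{lemmaIso} --- to the claim that every countably convex $m\colon\G(X)\to\Rinf$ is of the form $P\mapsto\int_X g_0\,dP$ for some $g_0\in\Meas(X,\Rinf)$ is exactly the paper's reduction. Where you diverge is at that final claim: the paper disposes of it in one line by invoking the codensity of $\Rinf$ in $\SCvx$ (its Theorem in \S\ref{s3}), which asserts that a countably convex functional on $\SCvx_{\Rinf^{\Rinf}}(\Rinf^X,\Rinf)$ is an evaluation $ev_f$ at some $f\in\Rinf^X$, and then observes that such $f$ are determined by the characteristic functions. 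You instead sketch a direct measure-theoretic construction, and that sketch has genuine gaps.

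Concretely, your step (i) --- measurability of $g_0(x)=m(\delta_x)$ --- is not available to you at that stage: as you yourself note, it is equivalent to $\eta_X$ being a $\Meas$-morphism into $\Sigma(\T(X))$, which requires that $m\circ\delta$ be $\Sigma_X$-measurable for \emph{every} $m\in\SCvx(\G(X),\Rinf)$; that is an instance of the very inclusion you are proving, so the argument is circular as written. (On the paper's route it is free: $m=ev_f$ gives $m(\delta_x)=f(x)$ with $f$ measurable.) Your step (iii) leans on Lemma \ref{imageLemma}, but that lemma says only that an $\Rinf$-generalized element $J$ of a super convex space $A$ satisfies $J(h)\in Image(h)$ for $h\in\Rinf^A$; applied to $Q=P(\cdot\mid U_i)$ it yields no more than that $m(Q)$ lies in the image of $m$ restricted to measures supported on $U_i$, not the bound $\inf_{x\in U_i}g_0(x)\le m(Q)\le\sup_{x\in U_i}g_0(x)$ that your refinement-and-convergence argument needs --- and that bound is essentially equivalent to the conclusion being proved. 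Unbounded or $\infty$-valued $g_0$ and the convergence of the refined sums are likewise left uncontrolled. None of this makes the lemma false --- the codensity theorem is precisely the tool that closes the gap --- but as written your proof does not establish the reverse inclusion.
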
 
\begin{proof}
Since $\T$ is $\G$ viewed as a map into $\SCvx$, 
 it is only necessary to verify that the $\sigma$-algebra specified by the functor $\Sigma$ coincides with the $\sigma$-algebra generated by all the evaluation maps  $\{\G(X) \stackrel{ev_U}{\longrightarrow} \Rinf\}_{U \in \Sigma_X}$.
 This follows  from the diagram \ref{diagram} in Lemma \ref{lemmaIso}.   Since $\phi$ is a bijection between the sets, the smallest $\sigma$-algebra on $\G(X)$ such that all the evaluation maps $ev_U$ are measurable corresponds to the smallest $\sigma$-algebra on $\SCvx_{\Rinf^{\Rinf}}(\Rinf^X, \Rinf)$ such that all the evaluation maps $ev_{\chi_U}$ are measurable.   
 The set $\{ev_{\chi_U}\}_{U \in \Sigma_X}$ is a spanning set for the set of all countably convex maps on
  $\SCvx_{\Rinf^{\Rinf}}(\Rinf^X, \Rinf)$.  
  That is,  any countably convex map \mbox{$\SCvx_{\Rinf^{\Rinf}}(\Rinf^X, \Rinf) \stackrel{m}{\longrightarrow} \Rinf$} is an evaluation point, $m=ev_f$, for a unique point $f \in \Rinf^X$ since $\Rinf$ is codense in $\SCvx$.  These functions, like $f$, are completely determined by the characteristic functions. Consequently  the set $\{ev_{\chi_U}\}_{U \in \Sigma_X}$ generates the same $\sigma$-algebra as the set of all countably convex maps.
\end{proof}

\begin{thm} \label{adjunction}
For the functors $\T$ and $\Sigma$, with  the unit specified identically to that of the Giry monad, and the counit, as specified in Lemma \ref{naturalA},  it follows that  $\T \dashv \Sigma$, and $(\T,\Sigma,\eta,\mu)$ is a factorization of the Giry monad.
\end{thm}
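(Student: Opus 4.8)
The plan is to deduce $\T\dashv\Sigma$ from the two triangle identities for the unit $\eta$ (the Giry unit $x\mapsto\delta_x$) and the counit $\epsilon$ (the barycenter map, natural by Lemma \ref{naturalA}), and then to check that the monad this adjunction induces on $\Meas$ is exactly $(\G,\eta,\mu)$. The substantive work is already behind us: the codensity of $\Rinf$ in $\SCvx$ is what makes each $\epsilon_A$ well defined, the decomposition $\Sigma\T=\G$ holds on the nose, and $\eta$ is shared with the Giry monad by construction. What remains is bookkeeping, organized around two elementary principles: a probability measure is determined by its values on measurable sets, and $\Rinf$ coseparates $\SCvx$, so two parallel $\SCvx$-maps agree once they agree after post-composition with every $m\in\SCvx(-,\Rinf)$.

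\emph{First triangle identity}, $\epsilon_{\T X}\circ\T(\eta_X)=\mathrm{id}_{\T X}$ for each measurable space $X$. Fix $P\in\T X=\G(X)$. The measure $\T(\eta_X)(P)$ is the pushforward of $P$ along $x\mapsto\delta_x$, so for every $m\in\SCvx(\T X,\Rinf)$ the defining property of $\epsilon$ gives $m\bigl(\epsilon_{\T X}(\T(\eta_X)(P))\bigr)=\widehat{\T(\eta_X)(P)}|(m)=\int_{\Sigma\T X}m\,d\bigl((\delta_\bullet)_*P\bigr)=\int_X m(\delta_x)\,dP(x)$. Taking $m=ev_U$ for $U\in\Sigma_X$ (this lies in $\SCvx(\T X,\Rinf)$, being a countably convex map), the right-hand side is $\int_X\delta_x(U)\,dP(x)=\int_X\chi_U\,dP=P(U)=ev_U(P)$. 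Since the evaluations $\{ev_U\}_{U\in\Sigma_X}$ separate the points of $\G X$, it follows that $\epsilon_{\T X}(\T(\eta_X)(P))=P$. (One could instead invoke the isomorphism $\phi$ of Lemma \ref{lemmaIso} and extend $\phi(Q)(\chi_U)=Q(U)$ to $\phi(Q)(h)=\int h\,dQ$ by the usual simple-function/monotone-convergence argument, but the reduction to the generators $ev_U$ is cleaner.)

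\emph{Second triangle identity}, $\Sigma(\epsilon_A)\circ\eta_{\Sigma A}=\mathrm{id}_{\Sigma A}$ for each super convex space $A$. Since $\Sigma$ leaves underlying maps unchanged, the composite is the set map $x\mapsto\epsilon_A(\delta_x)$, so it suffices to prove $\epsilon_A(\delta_x)=x$. By definition $\epsilon_A(\delta_x)$ is the unique element $a\in A$ with $m(a)=\widehat{\delta_x}|(m)=\int_A m\,d\delta_x=m(x)$ for all $m\in\SCvx(A,\Rinf)$; since these maps separate the points of $A$ ($\Rinf$ being a coseparator), $a=x$. With both identities established, $\T\dashv\Sigma$. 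The induced monad on $\Meas$ has underlying endofunctor $\Sigma\T=\G$, unit $\eta$, and multiplication with components $\Sigma(\epsilon_{\T X})$; as $\Sigma$ does not change underlying maps, it only remains to check $\epsilon_{\T X}=\mu_X$ as maps $\G\G X\to\G X$, where $\mu_X\colon\mathcal P\mapsto\bigl(U\mapsto\int_{\G X}ev_U\,d\mathcal P\bigr)$. For $\mathcal P\in\G\G X$ and any $U$ we have $ev_U\bigl(\epsilon_{\T X}(\mathcal P)\bigr)=\widehat{\mathcal P}|(ev_U)=\int_{\G X}ev_U\,d\mathcal P=\mu_X(\mathcal P)(U)$, and again the $ev_U$ separate points of $\G X$, so $\epsilon_{\T X}(\mathcal P)=\mu_X(\mathcal P)$. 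Hence $(\T,\Sigma,\eta,\mu)$ is a factorization of the Giry monad.

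The genuine content of the statement was discharged in the earlier lemmas — above all the codensity theorem and Lemma \ref{imageLemma}, which are what make $\epsilon$ well defined — so the step deserving the most care here is the first triangle identity, and within it the confirmation that the defining equation for $\epsilon_{\T X}$ may legitimately be tested against the generators $ev_U$ (i.e.\ that $ev_U\in\SCvx(\T X,\Rinf)$ and that $\widehat{(\delta_\bullet)_*P}|$ is defined on $ev_U$), after which everything collapses to the identity $\int_X\chi_U\,dP=P(U)$ together with the fact that a measure is pinned down by its values on measurable sets. A small caveat to keep in mind is that the integrals here take values in $\Rinf$ and may equal $+\infty$, so any approximation argument (needed only for the alternative route through Lemma \ref{lemmaIso}) should be phrased for nonnegative integrands using the convention defining countable convex sums in $\Rinf$.
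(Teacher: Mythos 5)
Your proof is correct and takes essentially the same route as the paper: verify the two triangle identities for $\eta$ and $\epsilon$ directly and combine with the decomposition $\Sigma\circ\T=\G$ from the preceding lemma. You in fact supply more than the paper does, which declares the identities ``straightforward'' and never explicitly checks that the induced multiplication $\Sigma(\epsilon_{\T X})$ agrees with the Giry $\mu_X$; your explicit verifications (including correctly reading $\T(\eta_X)$ as the pushforward along $x\mapsto\delta_x$, where the paper's element-chase writes $P\mapsto\delta_P$) are sound.
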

\begin{proof}
Verifying the two triangular identities are straightforward.  For $X$ any measurable space we have the commutative $\SCvx$-diagram,
\begin{equation}   \nonumber
 \begin{tikzpicture}[baseline=(current bounding box.center)]
 
 \node        (PX)  at  (0, 0)   {$\T(X)$};
  \node     (PPX) at  (3,0)  {$\T(\Sigma \T(X))$};
   \node   (TX)  at   (3,-1.5)   {$\T(X)$};
   
     \draw[->,above] (PX) to node {\small{$\T\eta_X$}} (PPX);
  \draw[->,below,left] (PX) to node  {\small{$id_{\T X}$}} (TX);
  \draw[->,right] (PPX) to node {\small{$\epsilon_{\T(X)}$}} (TX);
  
  \node        (p)  at  (5, 0)   {$P$};
  \node     (dp) at  (8,0)  {$\delta_P$};
   \node   (p2)  at   (8,-1.5)   {$P$};
   
     \draw[|->] (p) to node {} (dp);
  \draw[|->] (dp) to node  {} (p2);
  \draw[|->] (p) to node {} (p2);
  
 \end{tikzpicture}
 \end{equation}
\noindent
and for $A$ any super convex space, we have the commutative $\Meas$-diagram,
\begin{equation}   \nonumber
 \begin{tikzpicture}[baseline=(current bounding box.center)]
 
 \node        (SPSA)  at  (0, 1.5)   {$\Sigma(\T\Sigma A)$};
  \node     (SA) at  (0,0)  {$\Sigma A$};
   \node   (SA2)  at   (3,1.5)   {$\Sigma A$};
   
     \draw[->,left] (SA) to node {\small{$\epsilon_A$}} (SPSA);
  \draw[->,below,right] (SA) to node  [yshift=-3pt]{\small{$id_{\Sigma A}$}} (SA2);
  \draw[->,above] (SPSA) to node {\small{$\eta_{\Sigma A}$}} (SA2);
  
  \node        (a)  at  (5, 0)   {$a$};
  \node     (da) at  (5,1.5)  {$\delta_a$};
   \node   (a2)  at   (8,1.5)   {$a$};
   
     \draw[|->] (a) to node {} (da);
  \draw[|->] (da) to node  {} (a2);
  \draw[|->] (a) to node {} (a2);
  
 \end{tikzpicture}
 \end{equation}
 Combining this result with the preceding lemma shows that $(\T, \Sigma, \mu, \eta)$ factorizes the Giry monad.
\end{proof}

 \bibliographystyle{plain}

\begin{thebibliography}{99}

%\bibitem{Giry} M. Giry, A categorical approach to probability theory, in Categorical Aspects of Topology and Analysis, Vol. 915, pp 68-85, Springer-Verlag, 1982. 

\bibitem{Borger} Reinhard B\"orger and Ralf Kemper, \textit{Cogenerators for Convex Spaces}, Applied Categorical Structures, 2: 1-11, 1994. 

\bibitem{Isbell} J.R. Isbell, \textit{Adequate subcategories}. Illinois J. Math. 4(1960), 541-552.

\bibitem{LR} F.W. Lawvere and R. Rosebrugh,  Sets for Mathematics, Cambridge University Press, 2006. 

\bibitem{Mac} Saunders Mac Lane, Categories for the working mathematician, Springer Verlag, 1971.

\bibitem{Sturtz}  K. Sturtz, The factorization of the Giry monad, Advances in Mathematics,  December  2018. 

\end{thebibliography}

\end{document}